\newlength{\standardunitlength}
\newtheorem{prop}{Proposition}[section]
\newtheorem{theorem}[prop]{Theorem}
\begin{document}

\title{Carries and a map on the space of rational functions}

\author{Jason Fulman}
\address{Department of Mathematics, University of Southern California, Los Angeles, CA 90089-2532, USA }
\email{fulman@usc.edu}

\date{June 8, 2023}

\thanks{The author was funded by Simons Foundation grants 400528 and 917224, and thanks Persi Diaconis
for infinitely many conversations about the mathematics of carries.}

\begin{abstract} A paper by Boros, Little, Moll, Mosteig, and Stanley relates properties of a map defined on the space of rational
functions to Eulerian polynomials. We link their work to the carries Markov chain, giving a new proof and slight generalization
of one of their results.
\end{abstract}

\maketitle

\section{Introduction}

Let $\Re$ denote the space of rational functions with complex
coefficients. The Taylor expansion at $x=0$ of $R \in \Re$ is written as
\[ R(x) = \sum_{n >> -\infty} a_n x^n,\] where $n >> -\infty$ denotes the
fact that the coefficients vanish for large negative $n$.

Boros, Little, Moll, Mosteig, and Stanley \cite{bor} study the map $\Phi_2: \Re \mapsto \Re$ defined by
\[ \Phi_2(R(x)) = \sum_{n
>> -\infty} a_{2n+1} x^n. \] This can be written explicitly as
\[ \Phi_2(R(x)) = \frac{R(\sqrt{x})-R(-\sqrt{x})}{2 \sqrt{x}} \] and arose
in a general procedure for the integration of rational functions
\cite{BMoll} \cite{vm}.

We work with the slightly more general map $\Phi_b: \Re \mapsto \Re$ defined
for positive integers $b$ by
\[ \Phi_b(R(x)) = \sum_{n
>> -\infty} a_{bn+(b-1)} x^n.\] Letting $\Phi_b^r$ denote $r$ iterations of $\Phi_b$, it is
clear that $\Phi_b^r=\Phi_{b^r}$,
which suggests possible connections with riffle shuffling or carries.

Let us restrict to rational functions in the class \[ \mathcal{A}:=
\left\{ \frac{P(x)}{(1-x)^n}: \text{P is polynomial of degree} \leq n-2
\right\}.\] The proof of Theorem \ref{mainlemma} of this paper shows that $\mathcal{A}$ is
stable under $\Phi_b$.

In what follows we use Eulerian polynomials $A_n(x)$. To define these, we say that
a permutation $\pi$ has a descent at position $i$ ($1 \leq i \leq n-1$) if $\pi(i)>\pi(i+1)$.
For example, $\underline{5} 1 3 \underline{6} 2 4$ has two descents. We let the Eulerian
number $A(n,k)$ be the number of permutations on $n$ symbols with $k$ descents, and define the Eulerian polynomial
 by \[ A_n(x)=\sum_{k=0}^{n-1} A(n,k) x^k.\] 

Our main result is Theorem \ref{asymp}. This calculates asymptotics of iterates of the
map $\Phi_b$, and was in \cite{bor} for the special case $b=2$. Our proof of Theorem
\ref{asymp} uses the carries Markov chain and is new even in that case. We mention
that Theorem \ref{asymp} resembles Theorem 5.1 of \cite{DF2}.

\begin{theorem} \label{asymp} Let $h(x)=h_{n-2}x^{n-2} + \cdots +h_1 x
+h_0$ be a polynomial of degree at most $n-2$. Then
\[ \lim_{r \rightarrow \infty} \frac{\Phi_b^r
\left( \frac{h(x)}{(1-x)^n} \right) }{b^{r(n-1)}} = \frac{A_{n-1}(x) \cdot
h(1)}{(1-x)^n (n-1)!}.\]
\end{theorem}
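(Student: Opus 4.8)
The plan is to track the action of $\Phi_b^r$ directly on Taylor coefficients, isolate the leading term in $b^r$, and recognize the limit via the classical generating function for Eulerian polynomials; I assume $b\ge 2$, since $\Phi_1$ is the identity. First I would compute the Taylor coefficients of $R(x)=h(x)/(1-x)^n$. From $\frac{1}{(1-x)^n}=\sum_{m\ge 0}\binom{m+n-1}{n-1}x^m$ and $h(x)=\sum_{i=0}^{n-2}h_ix^i$ one gets $a_m=\sum_{i=0}^{n-2}h_i\binom{m-i+n-1}{n-1}$, and for $m\ge n-2$ this agrees with the value $p(m)$ of the polynomial $p(t)=\sum_i h_i\binom{t-i+n-1}{n-1}$ in $t$, which has degree $n-1$ and leading coefficient $\frac{1}{(n-1)!}\sum_i h_i=\frac{h(1)}{(n-1)!}$. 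Since $\Phi_b^r=\Phi_{b^r}$, the rational function $\Phi_b^r(R)$ has $x^m$-coefficient $a_{b^r m+(b^r-1)}=a_{b^r(m+1)-1}$, which equals $p\!\left(b^r(m+1)-1\right)$ once $b^r\ge n-1$. Expanding the top power of $b^r$ then gives, for each fixed $m\ge 0$,
\[ \lim_{r\to\infty}\frac{[x^m]\,\Phi_b^r(R)}{b^{r(n-1)}}=\frac{h(1)}{(n-1)!}\,(m+1)^{n-1}. \]

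Next I would upgrade this coefficientwise limit to a limit of rational functions. By Theorem~\ref{mainlemma}, $\mathcal{A}$ is stable under $\Phi_b$, so $\Phi_b^r(R)=P_r(x)/(1-x)^n$ with $\deg P_r\le n-2$; multiplying by $(1-x)^n$ writes each coefficient of $P_r$ as a fixed integer combination of $[x^0]\Phi_b^r(R),\dots,[x^{n-2}]\Phi_b^r(R)$. Hence $P_r(x)/b^{r(n-1)}$ converges to a polynomial $Q(x)$ of degree $\le n-2$, so $\Phi_b^r(R)/b^{r(n-1)}\to Q(x)/(1-x)^n$, whose Taylor coefficients are the limits just computed, i.e.\ $[x^m]\,Q(x)/(1-x)^n=\frac{h(1)}{(n-1)!}(m+1)^{n-1}$ for all $m\ge 0$. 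The Worpitzky--Carlitz identity $\sum_{m\ge 0}(m+1)^{n-1}x^m=A_{n-1}(x)/(1-x)^n$ then forces $Q(x)=\frac{h(1)}{(n-1)!}A_{n-1}(x)$, which is the claim.

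The step I expect to need the most care is precisely this passage from coefficientwise convergence to convergence of the rational functions, where stability of $\mathcal{A}$ (bounded denominator, bounded numerator degree) is indispensable. I would also note that the argument is the carries proof in disguise: Theorem~\ref{mainlemma} presumably realizes $\Phi_b$ on $\mathcal{A}$, in the basis of numerator coefficients, as $b^{n-1}$ times Holte's carries transition matrix for adding $n-1$ digits base $b$; the identity $\Phi_b^r=\Phi_{b^r}$ then corresponds to the ``amazing matrix'' relation $M_bM_c=M_{bc}$, and the displayed limit is convergence of the carries chain to its stationary distribution $j\mapsto A(n-1,j)/(n-1)!$, with the normalization by $b^{r(n-1)}$ exactly the one turning carries counts into probabilities.
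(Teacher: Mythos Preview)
Your proof is correct, and it takes a genuinely different route from the paper's. The paper applies Theorem~\ref{mainlemma} in full strength: it identifies $\Phi_{b}^r/b^{r(n-1)}$ on $\mathcal{A}$, in the basis of numerator coefficients, with the transpose of the $r$-step carries transition matrix $K_{b^r}=K_b^{\,r}$ (property~(H2)), and then invokes Holte's result (H3) that this chain converges to its Eulerian stationary distribution $A(n-1,i)/(n-1)!$. You instead work directly with Taylor coefficients: you recognize $a_m$ as a polynomial of degree $n-1$ in $m$ with leading coefficient $h(1)/(n-1)!$, extract the leading $b^r$-asymptotic of $a_{b^r(m+1)-1}$ by hand, and identify the limiting generating function $\sum_{m\ge 0}(m+1)^{n-1}x^m$ via Worpitzky's identity. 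Theorem~\ref{mainlemma} enters your argument only through its weaker consequence that $\mathcal{A}$ is $\Phi_b$-stable, which you correctly flag as the key step allowing passage from coefficientwise convergence to convergence of the numerator polynomial. Your approach is more elementary---it needs neither Holte's explicit matrix formula nor the convergence-to-stationarity argument---while the paper's approach is what makes the carries connection structurally visible and explains conceptually why Eulerian numbers appear. As you yourself observe in your final paragraph, the two arguments are dual: your Worpitzky step is precisely equivalent to the statement that the carries stationary vector is Eulerian.
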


Let us next describe the carries Markov chain, which was introduced by Holte \cite{ho} and is
likely unfamiliar to many readers. Consider adding two 40-digit binary numbers (the top row,
in italics, comprises the carries). 

\[
\begin{array}{lllllllll}
{\it 1}& {\it 01110} & {\it 01000} & {\it 00001} & {\it 00111} & {\it
10111} & {\it 00000}& {\it 00111} & {\it 11100 }\\
 &10111&00110&00000&10011&11011&10001&00011&11010\\
 &10011&10101&11110&10001&01000&11010&11001&01111\\ \hline
1&01010&11011&11111&00101&00100&01011&11101&01001
\end{array}
\]
For this example, 19/40=47.5\% of the columns have a carry of 1. It is
natural to guess that when two large numbers are added, about half the time
there will be a carry. When three numbers are added, there can be a carry of
$0,1,2$. It is less obvious to guess the proportions. Is it $1/3,1/3,1/3$? 
It turns out that the limiting proportions are given in terms of Eulerian numbers:
\[ A(3,0)/3! = 1/6, A(3,1)/3! = 4/6, A(3,2)/3!=1/6.\]

More generally, if one adds $n$ integers in any base $b$, the carry must be in $\{0,1,\cdots,n-1\}$.
Further, if the
numbers are represented base $b$ and chosen ``at random'' with the
digits chosen independently and uniformly in $\{0,1,\cdots,b-1\}$,
the sequence of carries
$\kappa_0=0,\kappa_1,\kappa_2,\ldots$ working from the right
is a Markov chain taking values in
$\{0,1,2,\ldots,n-1\}$.

Let $K_b(i,j) = P(\kappa'=j|\kappa=i)$ denote an entry of the transition matrix
between successive carries $\kappa$ and $\kappa'$. Among other things,
Holte established the following properties:

\begin{description}

\item[(H1)]  For all $0 \leq i,j \leq n-1$, \[ K_b(i,j)=\frac{1}{b^n}
\sum_{r=0}^{j-\lfloor i/b\rfloor}
(-1)^r {n+1 \choose r} {n-1-i+(j+1-r)b \choose n}.\] 

\item[(H2)] The matrices $K_a$ and $K_b$ multiply according to the rule 
$K_a K_b=K_{ab}$ for all real $a$ and $b$.

\item[(H3)] The matrix $K_b$ has stationary vector $\pi$ (left eigenvector
with eigenvalue 1) independent of the base $b$:
\[ \pi(j)=\frac{A(n,j)}{n!},
\] where $A(n,j)$ is an Eulerian number. The $n!$ in the
denominator is to make the entries of the left eigenvector sum to 1.
For three other proofs of this property, see Section 6.2 of \cite{DFbook}.

\end{description}

The carries Markov chain is truly ubiquitous; see Chapter 6 of \cite{DFbook}
for a survey. Its many appearances in mathematics include: riffle shuffling
(\cite{DF1},\cite{DF2}), fractals \cite{holfrac}, additive combinatorics \cite{DSS},
and cohomology \cite{Isak}.

Finally, there is an appearance of the carries chain in a context very close to that
of the current paper: the Hilbert series of Veronese subrings. Let
\[ \sum_{k \geq 0} a_k x^k = \frac{h(x)}{(1-x)^{n+1}} \]
and suppose we are interested in every bth term $\{a_{bk}\}$. It is not hard to see that
\[  \sum_{k \geq 0} a_{bk} x^k = \frac{h^{\langle b \rangle} (x)}{(1-x)^{n+1}} \] 
for another polynomial $h^{\langle b \rangle} (x)$ of degree at most $n+1$. 
Brenti and Welker \cite{BW} show that the ith coefficient of $h^{\langle b \rangle} (x)$
satisfies  \[ h_i^{\langle b \rangle} = \sum_{j=0}^{n+1} M_b(i,j) h_j, \] where $M_b$
is a certain $(n+2) \times (n+2)$ matrix. From their formula it is not hard to see that
deleting the first and last rows and columns of $M_b$ and multiplying by $b^{-n}$ and taking the transpose
yields the carries matrix. Theorem \ref{mainlemma} of the current paper is similar
to this, but the connection with the carries matrix is even more striking, since one
doesn't need to delete rows and columns to obtain the carries matrix.

\section{Main results}

The first result relates the map $\Phi_b$ to the base $b$ carries chain. The proof
is in part inspired by the proof of Theorem 1.1 of \cite{BW}.

\begin{theorem} \label{mainlemma}
Let $h(x)=h_{n-2} x^{n-2} + \cdots + h_1 x + h_0$ be a polynomial of degree at
most $n-2$. Define $h_i'$ by
\[ \Phi_b \left(  \frac{h(x)}{(1-x)^n}  \right) = \frac{\sum_{i \geq 0} h_i' x^i}{(1-x)^n}.\]
Then \[ h_i' = b^{n-1} \sum_{j=0}^{n-2} K_b(j,i) h_j,\] where $K_b$ is the transition
matrix of the base $b$ carries chain for the addition of $n-1$ numbers.
\end{theorem}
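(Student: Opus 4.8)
The plan is to compute the relevant Taylor coefficients explicitly and then match them against Holte's formula (H1). Expanding $\frac{1}{(1-x)^n}=\sum_{m\ge 0}\binom{m+n-1}{n-1}x^m$, one has for the given $h$
\[ \frac{h(x)}{(1-x)^n}=\sum_{k\ge 0}a_k x^k,\qquad a_k=\sum_{j=0}^{n-2}h_j\binom{k-j+n-1}{n-1}. \]
Applying $\Phi_b$ picks out $k=bi+(b-1)$, so the coefficient of $x^i$ in $\Phi_b\!\left(h(x)/(1-x)^n\right)$ equals $\sum_{j=0}^{n-2}h_j\,p_j(i)$, where $p_j(i):=\binom{bi+b-j+n-2}{n-1}$ is, for fixed $j$, a polynomial in $i$ of degree $n-1$. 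By linearity it then suffices to understand, for a single $j$, the generating function $\sum_{i\ge 0}p_j(i)x^i$, and the crux of the proof will be the identity
\[ \sum_{i\ge 0}\binom{bi+b-j+n-2}{n-1}\,x^i=\frac{b^{\,n-1}\sum_{i=0}^{n-2}K_b(j,i)\,x^i}{(1-x)^n}\qquad(0\le j\le n-2). \]

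To prove this I would multiply through by $(1-x)^n$. Since $p_j$ has degree $n-1$, its $n$-th finite difference vanishes, so $[x^i]\!\left((1-x)^n\sum_{m\ge 0}p_j(m)x^m\right)=0$ for all $i\ge n$; hence the left side times $(1-x)^n$ is a polynomial whose coefficients for $0\le i\le n-1$ are
\[ c_{j,i}:=\sum_{r=0}^{i}(-1)^r\binom{n}{r}p_j(i-r)=\sum_{r=0}^{i}(-1)^r\binom{n}{r}\binom{n-2-j+(i+1-r)b}{n-1}. \]
Two points need checking. First, $c_{j,n-1}=0$: by the same finite-difference identity this coefficient equals $(-1)^{n+1}p_j(-1)=(-1)^{n+1}\binom{n-2-j}{n-1}$, which is $0$ because $0\le n-2-j\le n-2<n-1$; this incidentally confirms that $\mathcal{A}$ is stable under $\Phi_b$. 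Second, I would compare $c_{j,i}$ against the specialization of (H1) to the addition of $n-1$ numbers,
\[ b^{\,n-1}K_b(j,i)=\sum_{r=0}^{\,i-\lfloor j/b\rfloor}(-1)^r\binom{n}{r}\binom{n-2-j+(i+1-r)b}{n-1}, \]
which has the same summand but stops at $r=i-\lfloor j/b\rfloor$ instead of $r=i$. The extra terms vanish: if $i-\lfloor j/b\rfloor<r\le i$ then $0<i+1-r\le\lfloor j/b\rfloor$, so $b\le(i+1-r)b\le j$, so $0\le n-2-j+(i+1-r)b\le n-2<n-1$ and the corresponding binomial coefficient is $0$. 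This establishes the identity.

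Combining, $\Phi_b\!\left(h(x)/(1-x)^n\right)=\sum_{j=0}^{n-2}h_j\sum_{i\ge 0}p_j(i)x^i=\dfrac{\sum_{i=0}^{n-2}\bigl(b^{\,n-1}\sum_{j=0}^{n-2}K_b(j,i)h_j\bigr)x^i}{(1-x)^n}$, and matching numerators with the definition of $h_i'$ gives the theorem. I expect the main obstacle to be purely the bookkeeping in the comparison step --- reconciling the naive finite-difference expansion of the numerator with Holte's closed form --- but as indicated this collapses to the observation that the discrepant binomial coefficients land in the vanishing range $\{0,1,\dots,n-2\}$. A more conceptual route, which I would mention, is to read both $c_{j,i}$ and $b^{\,n-1}K_b(j,i)$ as counting the tuples $\mathbf{x}\in\{0,\dots,b-1\}^{n-1}$ with $ib\le j+x_1+\cdots+x_{n-1}<(i+1)b$ --- equivalently, as the operation of extracting every $b$-th Taylor coefficient of $x^j/(1-x)^n$, which is the Veronese-type setup of Brenti and Welker recalled in the introduction --- thereby bypassing the explicit formula (H1) entirely.
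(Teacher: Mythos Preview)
Your argument is correct, but it is genuinely different from the paper's. The paper applies a roots-of-unity filter: writing $\rho=e^{2\pi i/b}$, it averages $\rho^{l}\,h(\rho^{l}x)/\bigl(x^{b-1}(1-\rho^{l}x)^{n}\bigr)$ over $l=0,\dots,b-1$, clears to a common denominator $(1-x^{b})^{n}$, and recognizes the resulting numerator coefficients as the composition numbers $C_b(ib-j+b-1,n)=\#\{(y_1,\dots,y_n)\in\{0,\dots,b-1\}^n:\sum y_t=ib-j+b-1\}$; it then quotes Holte for the identification $C_b(ib-j+b-1,n)=b^{\,n-1}K_b(j,i)$. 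You instead expand $1/(1-x)^n$ by binomials, observe that the selected coefficients $p_j(i)=\binom{bi+b-j+n-2}{n-1}$ are polynomial of degree $n-1$ in $i$, kill the numerator above degree $n-2$ by a finite-difference argument, and then match the surviving alternating binomial sum term-by-term against (H1), checking that the discrepancy in the upper summation limit contributes only vanishing binomials. The paper's route makes the combinatorics of carries visible (the composition count \emph{is} the carry probability up to normalization) and is closer in spirit to the Brenti--Welker Veronese computation it cites; your route is more elementary, avoids roots of unity, and gives a self-contained verification directly from Holte's closed form --- essentially the ``conceptual'' alternative you sketch at the end is what the paper actually does.
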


\begin{proof} Define a sequence $a_n$ by \[ \sum_{n \geq 0} a_n x^n =
\frac{h_{n-2}x^{n-2} + \cdots +h_1 x +h_0}{(1-x)^n}.\] Since \[ \sum_{n
\geq -(b-1)} a_{n+(b-1)} x^n = \frac{h_{n-2}x^{n-2} + \cdots +h_1 x
+h_0}{x^{b-1}(1-x)^n },\] letting $\rho=e^{2 \pi i/b}$, it follows that
\begin{eqnarray*} \sum_{n \geq -(b-1)} a_{bn+(b-1)}x^{bn} & = &
\frac{1}{b}  \sum_{l=0}^{b-1} \frac{h(\rho^l x)}{(\rho^l
x)^{b-1}(1-\rho^lx)^n } \\
& = & \frac{1}{b}  \sum_{l=0}^{b-1} \frac{\rho^l \cdot h(\rho^l
x)}{x^{b-1}(1-\rho^lx)^n
} \\
& = & \frac{1}{b}  \sum_{l=0}^{b-1} \frac{\rho^l \left[ \frac{1-(\rho^l
x)^b}{1-\rho^lx} \right]^n h(\rho^l x)}{x^{b-1}(1-x^b)^n
} \\
& = & \frac{1}{b} \sum_{l=0}^{b-1} \frac{\rho^l}{x^{b-1}} \frac{\left[ 1+
\rho^lx+ \cdots+ (\rho^lx)^{b-1} \right]^n h(\rho^l x)}{(1-x^b)^n}.
\end{eqnarray*} Letting $C_b(i,n)$ denote the number of ways of writing
$i$ as a sum $y_1+\cdots+y_n$ with $y_i \in \{0,1,\cdots,b-1\}$, this
becomes
\begin{eqnarray*}
& & \frac{1}{b} \sum_{i \geq 0} \frac{C_b(i,n)
\sum_{l=0}^{b-1} \rho^l (\rho^l x)^i h(\rho^lx)}{x^{b-1}(1-x^b)^n}\\
& = & \frac{1}{b} \sum_{j =0}^{n-2} \sum_{i \geq 0}
\frac{C_b(i,n) h_j \sum_{l=0}^{b-1} (\rho^l x)^{i+j} \rho^l}{x^{b-1}(1-x^b)^n} \\
& = & \frac{1}{b} \sum_{j =0}^{n-2} \sum_{i \geq 0}
\frac{C_b(i-j,n) h_j x^i \sum_{l=0}^{b-1} \rho^{l(i+1)}}{x^{b-1}(1-x^b)^n} \\
& = & \sum_{j=0}^{n-2} \sum_{i \geq 0 \atop i=b-1 \ \text{mod \ } b}
\frac{C_b(i-j,n) h_j x^{i-(b-1)}}{(1-x^b)^n} \\
& = & \sum_{j=0}^{n-2} \sum_{i \geq 0 \atop b|i}
\frac{C_b(i-j+b-1,n) h_j x^i}{(1-x^b)^n} \\
& = & \sum_{j=0}^{n-2} \sum_{i \geq 0} \frac{C_b(ib-j+b-1,n) h_j
x^{ib}}{(1-x^b)^n}. \end{eqnarray*}

Summarizing, it has been shown that \begin{eqnarray*} \sum_{n \geq 0}
a_{bn+(b-1)} x^{bn} & = & \sum_{n \geq -(b-1)} a_{bn+(b-1)} x^{bn}\\
& = & \sum_{j=0}^{n-2} \sum_{i \geq 0} \frac{C_b(ib-j+b-1,n) h_j
x^{ib}}{(1-x^b)^n}.\end{eqnarray*} Replacing $x^b$ by $x$, it follows that
\[ \Phi_b \left( \frac{h(x)}{(1-x)^n} \right) = \frac{\sum_{i \geq 0}
h_i' x^i}{(1-x)^n},\] where  \[ h_i' = \sum_{j=0}^{n-2} C_b(ib-j+b-1,n)
h_j.\]

Note that $C_b(ib-j+b-1,n)=0$ if $i \geq n-1, j \leq n-2$. Thus $\sum_i
h_i' x^i$ is a polynomial of degree at most $n-2$. Note that $C_b(ib-j+b-1,n)$ is equal to the
coefficient of $x^{ib-j+b-1}$ in $(1+x+\cdots+x^{b-1})^n$. From page 140
of \cite{ho}, it follows that \[C_b(ib-j+b-1,n)= b^{n-1} K_b(j,i),\] where
$K_b$ is the transition matrix of the base $b$ carries chain for the
addition of $n-1$ numbers. This completes the proof.
\end{proof}

Given the connection between the carries Markov chain and the map $\Phi_b$,
it is now straightforward to prove Theorem \ref{asymp}.

\begin{proof} (Of Theorem \ref{asymp}) By the previous result, \[ \frac{\Phi_b^r
\left(\frac{h(x)}{(1-x)^n} \right)}{b^{r(n-1)}} = \frac{\Phi_{b^r}
\left(\frac{h(x)}{(1-x)^n} \right)}{b^{r(n-1)}} = \frac{\sum_{i=0}^{n-2}
\sum_{j=0}^{n-2} K_{b^r}(j,i) h_j x^i}{(1-x)^n}.\] 

From property (H2) in the introduction, $K_{b^r}(j,i)$ is the chance that the base $b$ carries
chain for the addition of $n-1$ numbers goes from $j$ to $i$ in $r$ steps. From property
(H3) in the introduction, the stationary distribution of the base $b$ carries chain is given by
$A(n-1,i)/(n-1)!$. Thus for all $j$, \[ \lim_{r \rightarrow \infty} K_{b^r} (j,i) = \frac{A(n-1,i)}{(n-1)!},\] which completes
the proof.
\end{proof}

\end{document}